\title{Staircase palindromic polynomials}
\author{Rabi K.C.\footnotemark[1] \footnotemark [2], Abdalnaser Algoud\footnotemark[1]}
\numberwithin{equation}{section}
\newtheorem{defn}{Definition}[section]
\newtheorem{thm}[defn]{Theorem}
\newtheorem{lem}[defn]{Lemma}
\newtheorem{prop}[defn]{Proposition}
\newtheorem{rem}[defn]{Remark}
\newtheorem{ex}{Example}
\newcommand{\D}[1]{{\mathbb#1}}
\newcommand{\CC}{{\D{C}}}
\begin{document}

\maketitle

\begin{abstract}
We study a class of monic-palindromic polynomials that we call staircase palindromic polynomials. Specifically, suppose  $S(x,n,h)$ is a polynomial of degree~$n$ with the special form:
\begin{equation*}
\begin{split}
 S(x,n,h) & =   x^n + 2 x^{n-1} + 3 x^{n-2} +  \ldots  + (h-1) x^{n-h+2} \\
         & \hspace*{1cm} + h x^{n-h+1}  +  \ldots  + h x^{h-1} \\
          & \hspace*{2cm}     + (h-1) x^{h-2} + \ldots + 2 x + 1.
\end{split}
\end{equation*} 
Then $S(x,n,h)$ can be factored as a product of cyclotomic polynomials. Moreover, for any given n, there are $ \lceil{\frac{n+1}{2}\rceil}$ staircase polynomials, all of whose factors can be derived using two parameter~$n$ and~$h$ with the help of cyclotomic polynomials. After that we explore some classes of polynomials that can be converted to staircase polynomials. 
\end{abstract}


\section{Introduction}

In this paper, we will study a class of monic-palindromic polynomials that we call ``staircase palindromic polynomials". For simplicity, we will refer to a staircase palindromic polynomial as a SP polynomial. Finding the roots of any given polynomial with higher degree is generally a difficult task.  
However, in this  paper we will present a formula that gives all the irreducible factors of a given SP polynomial. Moreover, zeros of these factors are roots of unity.  Because of the symmetric property of coefficients of SP polynomials, we observe that these polynomials can be written as the product  of cyclotomic polynomials and hence all the roots of the SP polynomial are zeros of cyclotomic polynomials.
In addition to SP polynomials, we will present some special types of polynomials that behave like SP polynomials. 

Interestingly, SP polynomials arise in the study of a certain class of periodic solutions found in a population model of the cell cycle~\cite{RAT} (see \cite{YKR} for background information).
They appear as part of the characteristic polynomial of the linearization about the periodic solutions. A polynomial is said to be Schur stable if all its roots lie in the open unit disk. A periodic orbit is asymptotically stable if the corresponding characteristic polynomial is Schur stable. So, the analysis of SP polynomials plays an important role in determining the 
stability of the periodic solution of the model.

\footnotetext[1]{Mathematics, Ohio University, Athens, OH USA}
\footnotetext[2]{Corresponding author, {\tt rk733813@ohio.edu }}


\section{Basic results and notations}

First we will go over some notation, few standard definitions and results.


\subsection{Roots of unity}
A~$n^{th}$ root of unity is a number~$x$ satisfying the equation~$x^n - 1= 0$. For given~$n$, the distinct roots of unity for~$x^n - 1$ can be written as
 $$
 1, \zeta, \zeta^2, \ldots, \zeta^{n-1},
 $$
where $ \zeta = \cos(2\pi/n) + i \sin(2\pi/n) = e^{\frac{2\pi\cdot 1}{n}}$. These roots are equally spaced
on the unit circle and are the vertices of a regular polygon, called the unit n-gon.

\smallskip


Next we consider the cyclotomic polynomials, which are closely associated with roots of unity.

\smallskip


\smallskip

\begin{defn}
The {\em $n^{th}$ cyclotomic polynomial}~$\Psi_{n}(x)$ for any positive integer~$n$, is the unique irreducible polynomial with integer coefficients that is a divisor of~$x^n - 1$ and is not a divisor of~$x^k - 1$ for any $k<n$.
\end{defn}

Note: For~$n>2$, the cyclotomic polynomials are monic-polynomials with integer coefficients that are irreducible over the field of the rational numbers.
\smallskip

\begin{lem}
If $k$ is prime, then the cyclotomic polynomial of $x^k-1$ is 
$$
\Psi_k(x) = x^{k-1}+ x^{k-2} + \ldots + x + 1. 
$$ 
\end{lem}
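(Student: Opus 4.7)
The plan is to verify directly that the polynomial $f(x) = x^{k-1}+x^{k-2}+\cdots+x+1$ satisfies each clause of the definition of $\Psi_k(x)$: it has integer coefficients, is irreducible over $\mathbb{Q}$, divides $x^k-1$, and fails to divide $x^m-1$ for any $m<k$.

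The divisibility $f(x)\mid x^k-1$ is immediate from the well-known telescoping identity $x^k-1=(x-1)f(x)$, which also shows that the coefficients are integers (all equal to $1$). Next I would verify that $f$ does not divide $x^m-1$ for $m<k$. Any root $\zeta$ of $f$ is a root of $x^k-1$ but not of $x-1$, so $\zeta^k=1$ and $\zeta\neq 1$. If we had $\zeta^m=1$ for some $1\leq m<k$, then since $k$ is prime and $m<k$ we get $\gcd(m,k)=1$; writing $am+bk=1$ with $a,b\in\mathbb{Z}$ yields $\zeta=\zeta^{am+bk}=(\zeta^m)^a(\zeta^k)^b=1$, contradicting $\zeta\neq 1$. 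Hence no such $m$ exists.

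The main obstacle is irreducibility, and the standard route is Eisenstein's criterion after the shift $x\mapsto x+1$. I would compute
\[
f(x+1) \;=\; \frac{(x+1)^k-1}{(x+1)-1} \;=\; \frac{1}{x}\sum_{j=1}^{k}\binom{k}{j}x^{j} \;=\; \sum_{j=0}^{k-1}\binom{k}{j+1}x^{j}.
\]
The leading coefficient is $\binom{k}{k}=1$. For $1\le j\le k-1$ the intermediate coefficients are $\binom{k}{j+1}$, and because $k$ is prime the factor of $k$ in the numerator $k!$ cannot be cancelled by $(j+1)!(k-j-1)!$, so $k\mid\binom{k}{j+1}$. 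Finally the constant term is $\binom{k}{1}=k$, which is divisible by $k$ but not by $k^{2}$. Eisenstein's criterion with the prime $k$ therefore establishes that $f(x+1)$ is irreducible over $\mathbb{Q}$, and since irreducibility is preserved under the invertible substitution $x\mapsto x-1$, so is $f(x)$.

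Assembling the three properties, $f(x)$ meets the unique characterization of $\Psi_k(x)$, completing the proof. The only delicate point is the divisibility of the binomial coefficients, which hinges squarely on $k$ being prime; this is precisely where the primality hypothesis is used, and it is where I would be most careful in writing up the argument.
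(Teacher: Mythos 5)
Your proof is correct, but note that the paper itself offers no proof of this lemma: it is stated in the background section as a standard fact (with Lang and Rotman--Cuoco in the bibliography), so there is nothing in the paper to compare against. Your argument is the classical one and verifies exactly the clauses of the paper's definition of $\Psi_k$: the telescoping identity $x^k-1=(x-1)f(x)$ gives divisibility, the Bezout argument (using primality of $k$) shows $f \nmid x^m-1$ for $m<k$, and Eisenstein at the prime $k$ after the shift $x\mapsto x+1$ gives irreducibility over $\QQ$. Two small points to tidy in a write-up: the claim that no root of $f$ is a root of $x-1$ deserves the one-line justification $f(1)=k\neq 0$ (a priori $x-1$ and $f$ could share a root even though their product is $x^k-1$); and in the Eisenstein step your index range has an off-by-one, since for $j=k-1$ the coefficient $\binom{k}{j+1}=\binom{k}{k}=1$ is the leading coefficient and is not divisible by $k$ --- the coefficients divisible by $k$ are $\binom{k}{i}$ for $1\le i\le k-1$, i.e.\ $1\le j\le k-2$ together with the constant term $\binom{k}{1}=k$. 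With those cosmetic fixes the proof is complete, and the use of primality is indeed exactly where you say it is: both in the Bezout step and in the divisibility of the binomial coefficients.
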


\smallskip

\begin{thm}\label{th:factor}
Let $n$ be a positive integer, then 
$$
x^n-1 = \displaystyle \prod_{d|n} \Psi_{d}(x). 
$$ 
\end{thm}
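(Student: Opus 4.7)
My plan is to prove the factorization by showing that the two monic polynomials $x^n - 1$ and $\prod_{d \mid n} \Psi_d(x)$ have exactly the same roots in $\CC$, each with multiplicity one, and hence coincide.

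First I would establish the divisibility $\prod_{d \mid n} \Psi_d(x) \mid x^n - 1$. For each divisor $d$ of $n$, the standard factorization $x^n - 1 = (x^d - 1)\bigl((x^d)^{n/d - 1} + \cdots + 1\bigr)$ gives $x^d - 1 \mid x^n - 1$, and $\Psi_d \mid x^d - 1$ by the definition, so $\Psi_d \mid x^n - 1$. Next I would argue pairwise coprimality of the $\Psi_d$: if $\Psi_a$ and $\Psi_b$ with $a \neq b$ shared a root $\alpha$, then since each is monic and irreducible over $\QQ$, each would equal the minimal polynomial of $\alpha$, forcing $\Psi_a = \Psi_b$; but then, taking $a < b$, we would have $\Psi_b = \Psi_a \mid x^a - 1$, contradicting the defining property of $\Psi_b$. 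Pairwise coprime divisors of $x^n - 1$ in the UFD $\ZZ[x]$ multiply to a divisor of $x^n - 1$, which gives the claimed divisibility.

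For the reverse direction I need every $n$-th root of unity to appear as a root of the product. Let $\alpha \in \CC$ be any $n$-th root of unity and let $d$ be its multiplicative order, so $d \mid n$. Let $m_\alpha(x) \in \QQ[x]$ be the minimal polynomial of $\alpha$; since $\alpha$ is an algebraic integer, Gauss's lemma places $m_\alpha$ in $\ZZ[x]$ when taken monic. Because $\alpha^d = 1$ but $\alpha^k \neq 1$ for $0 < k < d$, we have $m_\alpha \mid x^d - 1$ while $m_\alpha \nmid x^k - 1$ for $k < d$. The uniqueness clause in the definition of $\Psi_d$ then forces $m_\alpha = \Psi_d$, so $\Psi_d(\alpha) = 0$, and $\alpha$ is a root of $\prod_{d \mid n} \Psi_d(x)$.

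Combining the two parts, $x^n - 1$ and $\prod_{d \mid n} \Psi_d(x)$ are monic polynomials with the same $n$ simple complex roots (the $n$-th roots of unity), so they are equal. I expect the main obstacle to be this second half: the stated definition of $\Psi_d$ only guarantees \emph{one} irreducible factor of $x^d - 1$ with the stated non-divisibility property, so verifying that every primitive $d$-th root of unity actually lies inside $\Psi_d$ requires the minimal-polynomial/uniqueness argument, with Gauss's lemma supplying the integrality.
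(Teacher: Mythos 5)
Your argument is correct, but there is nothing in the paper to measure it against: the paper states Theorem~\ref{th:factor} as background, with no proof, implicitly deferring to the standard algebra references in its bibliography. Your route is a legitimate self-contained derivation from the paper's own (somewhat axiomatic) definition of $\Psi_n$: you get $\prod_{d\mid n}\Psi_d \mid x^n-1$ from $\Psi_d \mid x^d-1 \mid x^n-1$ plus pairwise coprimality of distinct $\Psi_d$'s, and you get the reverse inclusion of roots by identifying the minimal polynomial of an $n$-th root of unity of exact order $d$ with $\Psi_d$ via the uniqueness clause of the definition (Gauss's lemma giving monic integrality). Since $x^n-1$ has $n$ distinct complex roots, the two monic polynomials coincide. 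The classical textbook proof goes the other way around: one defines $\Phi_d(x)=\prod_{\zeta \text{ of order } d}(x-\zeta)$, obtains the factorization immediately by grouping the $n$-th roots of unity by their multiplicative order, and then proves separately that $\Phi_d$ has integer coefficients and is irreducible; your proof instead absorbs that nontrivial content into the existence-and-uniqueness assertion built into the paper's definition of $\Psi_d$, which you correctly flag as the load-bearing clause. That is a fair trade within this paper's framework, and your explicit check that the minimal polynomial does not divide $x^k-1$ for $k<d$ is exactly what makes the uniqueness clause applicable.
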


\subsection{Some elementary definitions}
Consider a polynomial $$P(x) = a_n x^n + a_{n-1} x^{n-1}+ \cdots + a_1 x + a_0$$  of degree~$n$, where ~$n\geq 1 ~\textnormal{and}~ a_i \in \mathbb{R}$.

\smallskip

\begin{defn} 
The polynomial $P(x)$ is called {\em monic-polynomial} if~$a_n = 1.$
\end{defn}

\smallskip

\begin{defn}
The polynomial $P(x)$ is called all {\em all-one polynomial} if $a_i = 1$ for all~$i$.
\end{defn}
\smallskip

\begin{defn}
The polynomial $P(x)$ is called {\em palindromic} if $a_i = a_{n-i}$ for every~$i =~0, 1, \ldots, n.$
\end{defn}

\medskip


\section{Staircase palindromic polynomials}

\begin{defn}
For given $n$, consider monic-palindrome polynomials $$P(x) = a_n x^n + a_{n-1} x^{n-1}+ \cdots + a_1 x + a_0$$
of degree~$n$ and let $h$ be a positive integer satisfying $1 \leq h \leq \lceil{\frac{n+1}{2}\rceil}$. For fixed~$h$, if the coefficients of~$P(x)$ satisfy the following conditions:
\begin{align*}
    a_{\ell} &= a_{n-\ell} = \ell+1 ~\textnormal{for} ~\ell = 0, 1, \ldots h-1 ~\textnormal{and}\\ a_{h-1} &= a_{h} = \ldots = a_{n-h+1} = h.
\end{align*} 
Then the class of such polynomials~$P(x)$of degree~$n$ will be called a {\em staircase palindromic polynomials}.

For fixed height $h$ an staircase palindromic polynomial of degree~$n$ will be called SP polynomial and is  denoted by $S(x,n,h)$.
\end{defn}
Note: For any positive integer $n$, there are $\lceil \frac{n+1}{2}\rceil$ SP polynomials of degree n.

\begin{ex}
For~$n =7$, the possible values of the height ~$h$ are~$1, 2, 3$ and~$4$. The following polynomials are all SP polynomials of degree~$7$:
\begin{align*}
S(x,1,7) 
    &=x^7+x^6+x^5+x^4+x^3+x^2+x+1.\\
S(x,2,7)
    &=x^7+2x^6+2x^5+2x^4+2x^3+2x^2+2x+1.\\
S(x,3,7)
    &=x^7+2x^6+3x^5+3x^4+3x^3+3x^2+2x+1.\\
S(x,4,7)
    &=x^7+2x^6+3x^5+4x^4+4x^3+3x^2+2x+1.
\end{align*}
That is, the staircase palindromic polynomials of degree~$7$ is given by $\{S(x,1,7), S(x,2,7), S(x,3,7), S(x,4,7)\}$.
\end{ex}



\begin{lem}\label{lem:all1}
Consider an SP polynomial 
$$
S(x,n,h) = \displaystyle \sum_{i=1}^{h-1} i x^{i-1} + h \sum_{j=h-1}^{n+1-h}  x^{j} + \sum_{i=1}^{h} (h-i) x^{i+n}.
$$ 
Then there exist~$\ell, m$ such that~$S(x,n,h)$ is equal to the product of two all-one polynomials, i.e.,
$$
\displaystyle S(x,n,h)=\sum_{i=1}^{l} x^i \cdot \sum_{j=1}^{m} x^j. 
$$ 
 
\end{lem}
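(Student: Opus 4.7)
The plan is to exhibit an explicit factorization and then verify it by comparing coefficients. Since the convolution of two blocks of $1$'s of lengths $a$ and $b$ is exactly a trapezoidal (staircase) sequence, the natural candidate is
\begin{equation*}
S(x,n,h) \;=\; \bigl(1+x+x^{2}+\cdots+x^{h-1}\bigr)\bigl(1+x+x^{2}+\cdots+x^{n-h+1}\bigr),
\end{equation*}
so I would take $l=h-1$ and $m=n-h+1$ in the statement (reading the all-one sums as $\sum_{i=0}^{l}x^{i}$ and $\sum_{j=0}^{m}x^{j}$, i.e., correcting what appears to be an off-by-one in the displayed indexing, since otherwise the right-hand side would vanish at $x=0$ whereas $S(0,n,h)=1$).

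First I would expand the product and identify the coefficient of $x^{k}$ on the right as the number of pairs $(i,j)$ with $0\le i\le h-1$, $0\le j\le n-h+1$, and $i+j=k$. Because the admissible range $1\le h\le \lceil (n+1)/2\rceil$ forces $h-1\le n-h+1$, a short case analysis on the location of $k$ relative to these two bounds gives
\begin{equation*}
c_{k} \;=\; \begin{cases} k+1, & 0\le k\le h-1,\\ h, & h-1\le k\le n-h+1,\\ n+1-k, & n-h+1\le k\le n, \end{cases}
\end{equation*}
where $c_{k}$ denotes the coefficient of $x^{k}$ in the product. Next I would read off the coefficients of $S(x,n,h)$ directly from its definition: $a_{k}=k+1$ for $0\le k\le h-1$, $a_{k}=h$ for $h-1\le k\le n-h+1$, and by palindromy $a_{k}=a_{n-k}=n+1-k$ for $n-h+1\le k\le n$. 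These agree termwise with $c_{k}$, which establishes the identity.

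The main obstacle I anticipate is purely bookkeeping: one has to handle the two boundary indices $k=h-1$ and $k=n-h+1$ where two regimes overlap, and to be consistent about whether the all-one sums start at index $0$ or $1$. No deeper difficulty arises because, once the correct choice of $l$ and $m$ is made, the identity reduces to counting lattice points on antidiagonals of an $h\times(n-h+2)$ rectangle. It is also worth noting in passing that the constraint $h\le \lceil (n+1)/2\rceil$ is precisely what ensures the middle plateau is nonempty, equivalently that the shorter factor has length $h$ and the longer length $n-h+2$.
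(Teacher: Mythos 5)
Your proposal is correct and follows essentially the same route as the paper: both identify the factorization with $\ell=h-1$ and $m=n-h+1$ and verify it by expanding the product of the two all-one polynomials (the paper via a shifted-rows addition array, you via counting pairs $(i,j)$ with $i+j=k$, which is the same convolution computation). Your observation about the off-by-one in the displayed summation indices is apt; the paper's own proof silently makes the same correction by treating $\sum_{i=1}^{\ell}x^{i}$ as $1+x+\cdots+x^{\ell}$.
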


\begin{proof}
Consider the product 
\begin{align*}
\displaystyle \sum_{i=1}^{\ell} x^i \cdot \sum_{j=1}^{m} x^j
&= (1+x+x^2+\ldots+x^{\ell}) \cdot (1+x+x^2+\ldots+x^{m})
\end{align*}

\[
=
\renewcommand\arraystretch{1.2}
\begin{array}{c@{}c@{\,}c@{\,}c@{\,}c@{\,}c@{\,}c@{\,}c@{\,}c@{\,}c@{\,}ccl}
& \makebox[0pt]{\raisebox{-.5\normalbaselineskip}[0pt][0pt]{}} 
        1+&x+&x^2+&\ldots+&x^{\ell-1}+&x^{\ell}&~ &~ &~ &~ & ~& \\ 
\quad & ~&x+&x^2+&\ldots+&x^{\ell-1}+&x^{\ell}+&x^{\ell+1}&~ &~ &~ &\\ 
\qquad &~ &~ &x^2+&\ldots+&x^{\ell-1}+& x^{\ell}+&x^{\ell+1}+&x^{\ell+2}&~& ~& \\
& & & & & & \vdots & & & & &\\
&~ &~  & ~ & ~ & x^{m-1} +&  x^{m} + & x^{m+1} + & \ldots  &  + x^{\ell+m-2} & + x^{\ell+m-1} &  \\
\qquad &~ & ~ & ~ & ~ & ~&  x^{m}+& x^{m+1} + & \ldots  & + x^{\ell+m-2} & + x^{\ell+m-1}&+ x^{\ell+m} &   \\
\cline{1-12}\\[-1.09\normalbaselineskip]
\end{array}
\]
\begin{align*}
&=\displaystyle \sum_{i=1}^\ell ix^{i-1} + (\ell + 1) \sum_{j=\ell}^m x^j + \sum_{i=1}^\ell (\ell+1-i)x^{m+i}\\
&=\displaystyle \sum_{i=1}^{h-1} ix^{i-1} + h \sum_{j=h-1}^{n+1-h} x^j + \sum_{i=1}^{h-1} (h-i)x^{(n+1-h)+i},
\end{align*} 
where $\ell = h-1$ and 
$m+\ell=n$.
\end{proof}

\begin{thm}\label{thm:sp}
Consider an SP polynomial $S(x,n,h)$, with~$h>1$,
\begin{equation}\label{eq:mainSP}
S(x,n,h) = \prod_{\substack{{\delta|h}\\\delta \neq 1}} \Psi_\delta(x) \cdot \prod_{\substack{\tau|n+2-h\\\tau \neq 1}} \Psi_\tau(x),
\end{equation}
where  $\delta$ and~$\tau$  range over the set of divisors of~$h$ and~$n+2-h$, respectively. 
And  \begin{equation}\label{eq:mainSP1}
S(x,n,1) = \prod_{\substack{\tau_1|n+1\\ \tau_1 \neq 1}} \Psi_{\tau_1}(x),
\end{equation}
where~$\tau_1$ ranges over the set of divisors of~$n+1$.
The symbol~$\Psi_i(z)$ denotes the~$i^{th}$~cyclotomic polynomial.  
\end{thm}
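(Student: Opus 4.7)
The plan is to combine Lemma~\ref{lem:all1} with Theorem~\ref{th:factor}. Lemma~\ref{lem:all1} rewrites $S(x,n,h)$ as a product of two all-one polynomials of the form $1+x+\cdots+x^{\ell}$ and $1+x+\cdots+x^{m}$ with $\ell=h-1$ and $m=n-h+1$, and each such all-one polynomial is the quotient $(x^{k}-1)/(x-1)$, so the statement reduces almost immediately to the cyclotomic factorization of $x^{k}-1$ supplied by Theorem~\ref{th:factor}.

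More concretely, the first step is to rewrite the conclusion of Lemma~\ref{lem:all1} as
\begin{equation*}
S(x,n,h) \;=\; \frac{x^{h}-1}{x-1}\cdot\frac{x^{n+2-h}-1}{x-1}.
\end{equation*}
The second step is to apply Theorem~\ref{th:factor} to each numerator, obtaining $x^{h}-1=\prod_{\delta|h}\Psi_{\delta}(x)$ and $x^{n+2-h}-1=\prod_{\tau|n+2-h}\Psi_{\tau}(x)$. Since $\Psi_{1}(x)=x-1$ appears exactly once in each of these two products (as the $\delta=1$ and $\tau=1$ factors), the $(x-1)^{2}$ in the denominator cancels precisely those two copies of $\Psi_{1}(x)$, leaving formula~\eqref{eq:mainSP}.

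For the boundary case $h=1$, the first all-one factor in Lemma~\ref{lem:all1} degenerates to the constant $1$, so $S(x,n,1)=1+x+\cdots+x^{n}=(x^{n+1}-1)/(x-1)$; applying Theorem~\ref{th:factor} once and cancelling the single $\Psi_{1}(x)$ factor yields~\eqref{eq:mainSP1}. Because Lemma~\ref{lem:all1} does all of the combinatorial work, there is no substantive obstacle; the only point I would monitor carefully is the off-by-one bookkeeping, namely that $\ell+m=n$ with $\ell=h-1$ forces $m=n-h+1$ and hence $m+1=n-h+2$ terms in the second geometric sum, so that the second product is indexed by divisors of $n+2-h$ rather than of $n+1-h$. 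Once that index is fixed, the remaining calculation is purely mechanical.
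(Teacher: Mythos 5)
Your proposal is correct and follows essentially the same route as the paper: Lemma~\ref{lem:all1} gives the factorization into two all-one polynomials, each is rewritten as $(x^{k}-1)/(x-1)$, Theorem~\ref{th:factor} is applied, and the two $\Psi_{1}(x)=x-1$ factors cancel, with the $h=1$ case handled by the degeneration of the first factor to $1$. Your explicit attention to the index bookkeeping ($\ell=h-1$, $m=n-h+1$, hence divisors of $n+2-h$) matches the paper's computation and needs no change.
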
 

\begin{proof}
If  $S(x,n,h)$ is an SP polynomial, we have 
$$a_n = a_0 = 1, a_{n-1} = a_1 = 2, \ldots, a_{n-h+1} = a_{h-1} = h,$$ that is,
\begin{equation*}
    S(x,n,h) 
        = 1 + 2x + \cdots + h x^{h-1}+\cdots+ hx^{n+1-h}+\cdots+2x^{n-1}+x^n.
\end{equation*}

Then, using Lemma~\ref{lem:all1} we have
\begin{equation}\label{eq:all1SP}
    S(x,n,h) 
        =(1+\cdots+x^{h-1}) \cdot (1+\cdots+x^{n-h+1}).
\end{equation}
Since all the roots of all-one polynomials are roots of unity other than unity itself, so
$$
S(x,n,h)=\frac{x^h-1}{x - 1} \cdot \frac{x^{n+2-h}-1}{x - 1}.
$$
Now from Theorem \ref{th:factor},
\begin{align*}
    &= \frac{\displaystyle \prod_{\delta|h} \Psi_{\delta}(x)}{x-1} \cdot \frac{\displaystyle \prod_{\tau|n+2-h} \Psi_{\tau}(x)}{x-1}\\
    &= \displaystyle \prod_{\substack{{\delta}|h\\{\delta} \neq 1}} \Psi_{\delta}(x) \cdot  \prod_{\substack{{\tau}|n+2-h\\{\tau} \neq 1}} \Psi_{\tau}(x),\\
\end{align*}
where $\delta $ and $\tau$ are divisors greater than~$1$ of $h$ and $n+2-h$ respectively.

Next for~$h=1$, the first all one polynomial on the right hand side of Equation~\ref{eq:all1SP} is equal to~$1$ and $n+2-h = n+1$. So we have 
$$
S(x,n,1) =  \prod_{\substack{\tau_1|n+1\\ \tau_1 \neq 1}} \Psi_{\tau_1}(x),
$$
where~$\tau_1$ is an element from the set of divisors of~$n+1$.
\end{proof}

\begin{ex}
Consider an SP polynomial 
$$
S(x,4,3)=x^4+2x^3+3x^2+2x+1
$$ of degree $4$. Then the  divisor of both~$h = 3$ and~$n+2-h = 4+2-3 =3$ is $3$, so 
\begin{align*}
  S(x,4,3) &=\Psi_3(x)\cdot\Psi_3(x)\\
  &=(x^2+x+1)^2.  
\end{align*}
\end{ex}

\begin{rem}
If~$\zeta$ is a root of the polynomial~$S(x,n,h)$, then the 
conjugate~$\overline{\zeta}$ is also a root of~$S(x,n,h)$.
Note that the only possible real root of an SP polynomials is~$-1$.
Also, if $\zeta = -1$ is the root of an SP polynomial $S(x,n,h)$, 
then either~$n$ is odd or~$n$ is even and~$h=2$. 
\end{rem}


\section{Extensions to related polynomials }

In this section we will derive some extensions of SP polynomial which are motivated by following properties of cyclotomic polynomials: 
\subsection{Some properties of cyclotomic polynomials}

\begin{enumerate}
\item For odd number $m>1$ , $\Psi_m(-x)=\Psi_{2m}(x).$
\item If  $p$ is a prime and $m$ is a positive integer. If $p$ divides $m$, then $\Psi_{mp}(x)=\Psi_{m}(x^p).$
\item If $p$ is prime and~$m$ is a positive integer. If $p$ does not divide $m$, then 
$$\Psi_m(x^p) = \Psi_{pm}(x)\Psi_m(x)
$$ 
\end{enumerate}
\subsection{Alternating sign SP polynomials (ASP)}



\begin{defn}
Consider a polynomial
$$P(x)=(-1)^n x^n+(-1)^{n-1}a_{n-1}x^{n-1}+ ...+(-1)a_1x+a_0$$
with degree $n$. If the coefficients $a_i$ satisfy 
the conditions of the coefficients of a SP polynomial then polynomial is called alternating sign SP polynomial (ASP) of height~$h$ and will be denoted by~$T(x, n, h)$. 
\end{defn}

Using the properties of cyclotomic polynomials and  Theorem~\ref{thm:sp}, we can derive a formula for ASP analogous to SP polynomials.
\begin{lem}\label{lem:Asp}
Let $T(x,n,h)$ be an $ASP$, then
$$
    T(x,n,h) = \prod_{\substack{\delta|h\\ \delta \neq 1}} \Psi_\delta(-x) \cdot \prod_{\substack{\tau|n+2-h\\ \tau \neq 1}} \Psi_\tau(-x),
$$
 where $\delta$ and $\tau$ are divisors  of $h$  and $n+2-h$ respectively.
 And  \begin{equation*}
T(x,n,1) = \prod_{\substack{\tau_1|n+1\\ \tau_1 \neq 1}} \Psi_{\tau_1}(-x),
\end{equation*}
where~$\tau_1$ ranges over the set of divisors of~$n+1$.
\end{lem}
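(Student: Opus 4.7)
The plan is to reduce this lemma to Theorem~\ref{thm:sp} by the substitution $x \mapsto -x$. The first step is to verify the identity $T(x,n,h) = S(-x,n,h)$, where $S(x,n,h)$ is the SP polynomial whose coefficients $a_0, a_1, \dots, a_n$ are exactly the (unsigned) quantities appearing in $T(x,n,h)$. Expanding
$$S(-x,n,h) = (-x)^n + a_{n-1}(-x)^{n-1} + \cdots + a_1(-x) + a_0$$
collects to $(-1)^n x^n + (-1)^{n-1} a_{n-1} x^{n-1} + \cdots + (-1)\,a_1 x + a_0$, which matches the definition of $T(x,n,h)$ term by term.

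Once this identity is established, the conclusion is a direct substitution into the SP factorization. For $h>1$, Theorem~\ref{thm:sp} gives
$$S(x,n,h) = \prod_{\substack{\delta \mid h \\ \delta \neq 1}} \Psi_\delta(x) \cdot \prod_{\substack{\tau \mid n+2-h \\ \tau \neq 1}} \Psi_\tau(x),$$
and replacing $x$ by $-x$ on both sides produces the stated factorization of $T(x,n,h)$. The case $h=1$ is handled identically using the second formula in Theorem~\ref{thm:sp}, applied to $S(-x,n,1)$.

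There is no serious obstacle here; the entire content is already packaged in Theorem~\ref{thm:sp}, and the only care required is the sign bookkeeping that confirms $T(x,n,h) = S(-x,n,h)$. If one wished to rewrite the answer with cyclotomic factors evaluated at $x$ rather than at $-x$, the properties listed at the start of the section (in particular $\Psi_m(-x) = \Psi_{2m}(x)$ for odd $m>1$) could be applied factor by factor, but this is not needed for the statement as given.
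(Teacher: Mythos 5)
Your proposal is correct and follows essentially the same route as the paper: identify $T(x,n,h)=S(-x,n,h)$ via the substitution $y=-x$ and then apply Theorem~\ref{thm:sp} to obtain the factorization in terms of $\Psi_\delta(-x)$ and $\Psi_\tau(-x)$, with the $h=1$ case handled by the second formula. Your explicit coefficient check of the sign bookkeeping is slightly more careful than the paper's one-line substitution, but the argument is the same.
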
 

\begin{proof}
Let $y=-x.$ Then  $T(x,n,h)=S(-y,n,h)$ is a SP polynomial. Now using  Theorem~\ref{thm:sp},   
\begin{align*}
    T(x,n,h) 
             &= \prod_{\substack{\delta|h\\ \delta \neq 1}} \Psi_\delta(y) \cdot \prod_{\substack{\tau|n+2-h\\\tau\neq 1}} \Psi_\tau(y)\\
             &= \prod_{\substack{\delta|h\\\delta \neq 1}} \Psi_\delta(-x) \cdot \prod_{\substack{\tau|n+2-h\\ \tau \neq 1}} \Psi_\tau(-x).
\end{align*}
Therefore, 
$$ 
T(x,n,h) 
   = \prod_{\substack{\delta|h\\ \delta \neq 1}} \Psi_\delta(-x) 
     \cdot \prod_{\substack{\tau|n+2-h\\ \tau \neq 1}} \Psi_\tau(-x),
$$
where~$\Psi_i(-x)$ are cyclotomic polynomials obtained by plugging~$-x$ in the~$i^{th}-$cyclotomic polynomials.
Similarly, for~$h=1$ using Theorem~\ref{thm:sp} we get
\begin{equation*}
T(x,n,1) = \prod_{\substack{\tau_1|n+1\\ \tau_1 \neq 1}} \Psi_{\tau_1}(-x).
\end{equation*}
\end{proof}



\subsection{Missing terms  SP polynomials (MSP) }

\begin{defn}
 Consider a polynomial
$$
    P(x)
        =x^{n}+a_{(n-d)}x^{n-d}+...+a_{d}x^{d}+1
$$ 
of degree~$n$. If $d$ is a divisor of $n$  and the coefficients of~$P(x)$ satisfy the conditions of an SP polynomial of degree~$n$ and height~$h=a_{\lceil \frac{n+1}{2}\rceil}$ then the polynomial will be called missing terms SP polynomial (MSP) and is denoted by~$M(x, n, h).$
\end{defn}

\begin{lem}\label{lem:msp}
Let~$M(x,n,h)$ be an MSP polynomial, then\\
$$
    M(x,n,h) 
        = \prod_{\substack{\delta|h\\ \delta \neq 1}} \Psi_\delta(x^{d}) \cdot \prod_{\substack{\tau|\frac{n}{d}+2-h\\ \tau \neq 1}} \Psi_\tau(x^{d}).
$$
where $\delta$ and $\tau$ are divisors  of $h$  and $n+2-h$ respectively.
 And  \begin{equation*}
M(x,n,1) = \prod_{\substack{\tau_1|n+1\\ \tau_1 \neq 1}} \Psi_{\tau_1}(x^d),
\end{equation*}
where~$\tau_1$ ranges over the set of divisors of~$n+1$.
\end{lem}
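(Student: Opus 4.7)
The plan is to reduce this lemma directly to Theorem~\ref{thm:sp} via the substitution $y = x^{d}$. Since $d \mid n$ and every nonzero term of $M(x,n,h)$ sits at a position that is a multiple of $d$, this substitution converts $M$ into an honest polynomial
$$
N(y) = y^{n/d} + a_{n-d}\, y^{n/d - 1} + \cdots + a_{d}\, y + 1
$$
of degree $n/d$ in $y$. By the definition of an MSP polynomial, the coefficients $1, a_{d}, a_{2d}, \ldots, a_{n-d}, 1$ are, in order, the coefficients of an SP polynomial of degree $n/d$ and height $h$. Hence $N(y) = S(y, n/d, h)$, or equivalently
$$
M(x,n,h) \;=\; S(x^{d},\, n/d,\, h).
$$

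With this identification in hand, the factorization of $M$ follows immediately. Applying Theorem~\ref{thm:sp} to $S(y, n/d, h)$ gives
$$
S(y, n/d, h) \;=\; \prod_{\substack{\delta \mid h \\ \delta \neq 1}} \Psi_{\delta}(y) \;\cdot\; \prod_{\substack{\tau \mid (n/d)+2-h \\ \tau \neq 1}} \Psi_{\tau}(y),
$$
and the substitution $y = x^{d}$ yields the claimed formula. The $h = 1$ case is handled analogously: the $h=1$ part of Theorem~\ref{thm:sp} gives $S(y, n/d, 1) = \prod_{\tau_{1} \mid (n/d)+1,\, \tau_{1} \neq 1} \Psi_{\tau_{1}}(y)$, and substituting back produces the corresponding formula for $M(x,n,1)$.

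The only point that requires actual justification is the identification $M(x,n,h) = S(x^{d}, n/d, h)$, which is essentially a matter of bookkeeping once one translates the staircase-palindromic coefficient conditions through $y = x^{d}$: the nonzero positions $0, d, 2d, \ldots, n$ of $M$ correspond bijectively to the positions $0, 1, 2, \ldots, n/d$ of $N$, and the palindromic-staircase profile is preserved verbatim. Beyond this, there is no genuine obstacle, since all the analytic content is already packaged in Theorem~\ref{thm:sp}. It is worth noting that the factors $\Psi_{\delta}(x^{d})$ appearing in the conclusion are not in general irreducible; refining them into genuine cyclotomic polynomials in $x$ would require invoking properties (2) and (3) of Section~4.1, but such a refinement is not claimed by the lemma.
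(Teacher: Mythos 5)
Your proof is correct and follows essentially the same route as the paper: substitute $y = x^{d}$, observe that $M(x,n,h) = S(x^{d}, n/d, h)$ by the MSP definition, apply Theorem~\ref{thm:sp}, and substitute back. One small point worth noting is that your $h=1$ derivation actually produces divisors of $\frac{n}{d}+1$ rather than of $n+1$, which indicates the index in the lemma's stated $h=1$ formula is a typo that your argument (correctly) does not reproduce.
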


\begin{proof}
Let $y=x^d.$ Since~$d | n$ so we know  $M(x,n,h)=S(y,n/d,h)$ is an SP polynomial. Now using  Theorem~\ref{thm:sp},   
\begin{align*}
    M(x,n,h) &= S(y, n/d, h)\\
             &= \prod_{\substack{\delta|h\\ \delta \neq 1}} \Psi_\delta(y) \cdot \prod_{\substack{\tau|\frac{n}{d}+2-h\\\tau\neq 1}} \Psi_\tau(y)\\
             &= \prod_{\substack{\delta|h\\\delta \neq 1}} \Psi_\delta(x^d) \cdot \prod_{\substack{\tau|\frac{n}{d}+2-h\\ \tau \neq 1}} \Psi_\tau(x^d).
\end{align*}
Therefore, 
$$ 
M(x,n,h) 
   = \prod_{\substack{\delta|h\\ \delta \neq 1}} \Psi_\delta(x^d) 
   \cdot \prod_{\substack{\tau|\frac{n}{d}+2-h\\ \tau \neq 1}} \Psi_\tau(x^d),
$$
where~$\Psi_i(x^d)$ are  polynomials obtained by plugging~$x^d$ in the~$i^{th}-$cyclotomic polynomials.
Similarly, for~$h=1$ using Theorem~\ref{thm:sp} we get
\begin{equation*}
M(x,n,1) = \prod_{\substack{\tau_1|n+1\\ \tau_1 \neq 1}} \Psi_{\tau_1}(x^d).
\end{equation*}
\end{proof}

\subsection{Geometric SP polynomials (GSP)}
Next, we will look at polynomials whose coefficients are not palindromic but can be made palindromic by a particular substitution, such polynomials will be called a geometric SP polynomial.

\begin{defn}
Consider a polynomial
$$
G(x)
   =   b_n x^{n}+b_{n-1}x^{n-1}+...+b_{1}x+1$$ of degree~$n$. If $a_i=\frac{b_i}{\alpha^i}$ are coefficients of an SP polynomial,
where $\alpha={b_n}^\frac{1}{n}$,  then the polynomial~$G(x)$ will be called a geometric SP polynomial (GSP). 
\end{defn}

\begin{rem}
Note, if $\alpha=1$ then GSP is an SP polynomial and if $\alpha=-1$ then GSP is an ASP polynomial.
\end{rem}

\begin{lem}\label{lem:gsp} 
 Let $G(x)$ be a GSP polynomial of degree~$n$, then
$$ G(x) = \prod_{\substack{\delta|h\\ \delta \neq 1}} \Psi_\delta(\alpha{x}) 
   \cdot \prod_{\substack{\tau|n+2-h\\ \tau \neq 1}} \Psi_\tau(\alpha{x}).$$ where $h=a_{\lceil \frac{n+1}{2}\rceil}>1.$ 
And if~$h=1$ \begin{equation*}
G(x) = \prod_{\substack{\tau_1|n+1\\ \tau_1 \neq 1}} \Psi_{\tau_1}(\alpha x),
\end{equation*}
where~$\tau_1$ ranges over the set of divisors of~$n+1$.
   \end{lem}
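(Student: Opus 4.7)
The plan is to mirror the substitution arguments used in the proofs of Lemma~\ref{lem:Asp} and Lemma~\ref{lem:msp}: rescale the variable so that $G(x)$ becomes an ordinary SP polynomial in the new variable, then invoke Theorem~\ref{thm:sp} to read off the cyclotomic factorization.

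Concretely, I would set $y = \alpha x$. Unpacking the GSP definition, the relation $a_i = b_i/\alpha^i$ together with $\alpha^n = b_n$ yields $G(x) = \sum_{i=0}^n b_i x^i = \sum_{i=0}^n a_i (\alpha x)^i = S(\alpha x, n, h)$. Thus $G(x)$ is exactly the SP polynomial $S(y, n, h)$ of the same height $h$, evaluated at $y = \alpha x$. Note that the normalization $\alpha^n = b_n$ is precisely what guarantees $a_n = 1$, so $S(\cdot, n, h)$ is genuinely monic and Theorem~\ref{thm:sp} applies.

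With this identification in hand, both claimed formulas follow directly. For $h > 1$ the two-product formula of Theorem~\ref{thm:sp} over divisors of $h$ and of $n+2-h$ pulls back to $\alpha x$, and for $h = 1$ the single-product formula over divisors of $n+1$ does the same. As a sanity check, $\alpha = 1$ recovers Theorem~\ref{thm:sp} verbatim and $\alpha = -1$ recovers Lemma~\ref{lem:Asp}, consistent with the remark preceding the lemma.

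The argument contains no real obstacle: once the substitution $y = \alpha x$ is made, everything is a formal consequence of Theorem~\ref{thm:sp}, and the structure is essentially identical to the ASP and MSP proofs. The only point worth a brief comment is that the $n$-th root $\alpha = b_n^{1/n}$ is not canonically specified for $n > 1$, so the statement should be read as asserting the factorization for any choice of $\alpha$ for which the GSP hypothesis is actually satisfied; the substitution argument is insensitive to which such $\alpha$ one picks.
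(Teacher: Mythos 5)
Your proposal is correct and follows essentially the same route as the paper: the substitution $y=\alpha x$ turns $G(x)$ into $S(y,n,h)$ via $b_i=a_i\alpha^i$, and Theorem~\ref{thm:sp} then gives both the $h>1$ and $h=1$ factorizations. Your extra remarks on the normalization $\alpha^n=b_n$ and the choice of the $n$-th root are sensible clarifications but do not change the argument.
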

\begin{proof}

Let $y=\alpha{x}.$  Then   
\begin{align*}
    G(x) &= b_n x^{n}+b_{n-1}x^{n-1}+...+b_{1}x+1.\\
    &= a_n (\alpha x)^{n}+a_{n-1} (\alpha x)^{n-1}+...+a_{1}(\alpha x)+1.\\
    &= S(y, n,h).
\end{align*}
Now using  Theorem~\ref{thm:sp}, we have
\begin{align*}
     G(x)  &= S(y, n,h)\\
     &= \prod_{\substack{\delta|h\\ \delta \neq 1}} \Psi_\delta(y) \cdot \prod_{\substack{\tau|n+2-h\\\tau\neq 1}} \Psi_\tau(y)\\
             &= \prod_{\substack{\delta|h\\\delta \neq 1}} \Psi_\delta(\alpha{x}) \cdot \prod_{\substack{\tau|n+2-h\\ \tau \neq 1}} \Psi_\tau(\alpha{x}).
\end{align*}
Therefore, 
$$ G(x) 
   = \prod_{\substack{\delta|h\\ \delta \neq 1}} \Psi_\delta(\alpha{x}) 
   \cdot \prod_{\substack{\tau|n+2-h\\ \tau \neq 1}} \Psi_\tau(\alpha{x}),$$
where~$\Psi_i(\alpha{x})$ are  polynomials obtained by plugging~$\alpha{x}$ in the~$i^{th}$cyclotomic polynomials.
Similarly, for~$h=1$ using Theorem~\ref{thm:sp} we get
\begin{equation*}
G(x) = \prod_{\substack{\tau_1|n+1\\ \tau_1 \neq 1}} \Psi_{\tau_1}(\alpha x).
\end{equation*}
\end{proof}

\begin{figure}[ht!]
\centering
\begin{tikzpicture}


%
\end{tikzpicture}
\end{figure}


\section{Acknowledgement}


\end{document}